\newcommand{\R}{{\mathbb{R}}}
\newcommand{\Z}{{\mathbb{Z}}}
\newcommand{\p}{{\partial}}
\newcommand{\om}{{\omega}}
\newcommand{\eps}{{\varepsilon}}
\newcommand{\ga}{{\gamma}}
\newcommand{\Ham}{{\rm Ham}}
\newcommand{\Si}{{\Sigma}}
\newcommand{\QED}{\hfill$\Box$\medskip}
\newtheorem{theorem}{Theorem}
\newtheorem{corollary}{Corollary}[section]
\newtheorem{definition}{Definition}
\newtheorem{lemma}[corollary]{Lemma}
\newtheorem{prop}[corollary]{Proposition}
\newtheorem{proposition}[corollary]{Proposition}
\begin{document}

\title{On the injectivity radius in Hofer's Geometry}

\author{Fran\c{c}ois Lalonde}
\address{D\'epartement de math\'ematiques et de Statistique, Universit\'e de Montr\'eal, C.P. 6128, Succ. Centre-ville, Montr\'eal H3C 3J7, Qu\'ebec, Canada}
\email{lalonde@dms.umontreal.ca}

\author{Yasha Savelyev}
\address{Centre de Recherches Math\'ematiques, Universit\'e de Montr\'eal, C.P. 6128, Succ. Centre-ville, Montr\'eal H3C 3J7, Qu\'ebec, Canada}
\email{savelyev@crm.umontreal.ca}

\maketitle

\noindent
{\bf Abstract.} \,
{\em In this note we consider the following conjecture: given any closed symplectic manifold $M$, there is a sufficiently small real positive number $\rho$ such that the open ball of radius $\rho$  in the Hofer metric  centered at the identity on the group of Hamiltonian diffeomorphisms of $M$ is contractible, where the retraction takes place in that ball -- this is the strong version of the conjecture -- or inside the ambient group of Hamiltonian diffeomorphisms of $M$ -- this is the weak version of the conjecture. We prove several results that support that weak form of the conjecture.
\footnote{2010 Mathematics Subject Classification 53C15, 53D12, 53D40, 53D45, 57R58, 57S05, 58B20.}
\footnote{Hofer's geometry, Lagrangian submanifolds, Quantum characteristic classes.}
}

\bigskip \bigskip 
\section{General facts and results} 

     Consider a closed symplectic manifold $(M, \om)$ of any dimension. We recall that the Hofer norm on the group  $\Ham(M)$  of Hamiltonian diffeomorphisms of $M$ 
     assigns to each diffeomorphism $\phi \in \Ham(M)$ the infimum, over all Hamiltonians $H: M \times [0,1] \to \R$ 
     whose time-one flow equals $\phi$, of the mean total variation of $H$ defined by
     $$
      \int_0^1 (\max_M H_t - \min_M H_t) dt.
      $$ 
      Given a real number $\rho \geq 0$, let us now denote by $B_H(\rho)$ the
     subspace of $\Ham(M)$ of all diffeomorphisms of Hofer norm smaller or equal to $\rho$.  
     What is the topology of $B_H(\rho)$ when $\rho$ goes to zero, or when $\rho$ goes to $\infty$ ?
 When the manifold is a surface of genus larger than $0$, it has been proved by Lalonde and McDuff \cite{LM} that the group $\Ham(M)$ 
 has infinite diameter, i.e that $B_H(\rho)$ does not contain $\Ham(M)$ whatever the large value of $\rho$ chosen. This was extended by Lalonde and Pestieau  \cite{LP} to manifolds of the form $\Sigma \times M$ for $M$ weakly exact and $\Si$ the same kind of surface. The proof of the unboundedness of the group of Hamiltonian diffeomorphisms of the 2-sphere was given by Polterovich \cite{P}.
 A natural conjecture is that, for $\rho$ small enough,  $B_H(\rho)$ is
 contractible. Note that the corresponding statement is always true for finite
 dimensional Finsler manifolds $X$, as the exponential map at $x$ is always
 defined and is a diffeomorphism on a sufficiently small neighborhood of $ 0 \in
 T _{x} X$, see for example \cite[Chapter 11]{Shen}. Moreover the analogous statement holds for the
 group of volume preserving diffeomorphisms  of a Riemannian
 manifold $X$, with its natural $L ^{2}$-metric, see Ebin-Marsden \cite{Ebin}.
 In this latter case, however, it is a deep and difficult fact, although it is again essenrtially a
 statement about the existence of an exponential map in a neighborhood of the tangent
 bundle at a point. In the finite dimensional Finsler setting, the size of the
 largest ball in $T _{x} X$ on which the exponential map is defined and is a diffeomorphism is
 called the injectivity radius at $x$. 
 For a general metric space $X$, we may call the supremum
 of $\rho$'s for which the $ \rho$-ball around $x \in X$ is contractible, 
 the {\it injectivity radius} at $x$, although in the finite dimensional
 Finsler setting, the classical injectivity radius is only a lower bound for
 the above generalization. Summarizing, if the conjecture holds, we have an interesting
 numerical invariant of a symplectic manifold $(M, \omega)$: the injectivity radius of $ \text {Ham}(M, \omega)$.
 
 We may also ask if  for $\rho$ small enough, the inclusion map of $B_H(\rho)$
 into $\Ham(M, \om)$ is null-homotopic (with respect to the
 $C^{\infty}$-topology, and therefore with respect to the Hofer topology). We will refer to this as the weak conjecture since the contraction to a point of the ball $B_H(\rho)$ may then take place in the full ambient space $\Ham(M, \om)$, instead of the ball itself. It is perhaps worth noting that the terminology {\it ball} for $B_H(\rho)$ might be misleading: $B_H(\rho)$ is not parametrized by a ball, it is a {\it subset} of 
 $\Ham(M, \om)$ whose topology might be a priori complicated.
  Note that for surfaces of genus $g \geq 1$, this weak conjecture is obvious
 because the whole group $\Ham(M,\om)$ is contractible in that case.
  
 Another related question, and indeed a possible way to approach the above
conjecture, is to show that there is a  $\rho>0$ such that the space of paths from the
identity to $x\in B _{H} (\rho)$, minimizing the Hofer length up to $\delta$, is
contractible for some $ \delta$. From now on, such paths will be referred to as
$\delta$-minimizing. Denote the latter path space $P 
_{\delta} (id, x)$. We may in general ask for which $\rho$ and which $ (M,
\omega)$ is the inclusion $P  _{\delta}(id, x) \to P (id, x)$ null homotopic. 
Interestingly while this may seem like a harder question than the original
conjecture, the theory of Gromov-Witten invariants in particular quantum
characteristic classes \cite{GS}, give  a partial answer, which we now describe.
For the moment, let us merely consider quantum classes as certain  
invariants of homotopy groups: $$a \mapsto qc _{k-1} (a) \in QH (M)),$$ for
$$a \in \pi _{k} \text {Ham}(M, \omega) \simeq \pi _{k-1} \Omega \text
{Ham}(M, \omega),$$ 
(the shift by $-1$ is for consistency with \cite{GS}). When $k=1$ we just get
the Seidel invariant: $$qc _{0} (a) = S (a), $$
see \cite{citeSeidel$pi_1$ofsymplecticautomorphismgroupsandinvertiblesinquantumhomologyrings}.
\begin{definition}
   We say that
   \emph {\textbf{quantum classes detect rational homotopy groups}} of $ \text
{Ham}(M, \omega)$ if whatever $k$ and an element $a \in \pi _{k} (
\text {Ham}(M, \omega), \mathbb{Q})$ given, it vanishes as soon as $qc _{k-1} (a)$ vanishes. \end{definition} 
This is known to hold for example for $M=S ^{2}$, and
$M= \mathbb{CP} ^{2}$  as in this case the Hamiltonian group
retracts onto the compact  subgroups $PSU  (2)$, respectively $ PSU
(3)$ by a classical theorem of Smale, respectively classical theorem of Gromov,
\cite{citeGromovPseudoholomorphiccurvesinsymplecticmanifolds.}. The non-zero rational homotopy groups are in degrees $3$, respectively
$3,5$. These degrees are in the so called stable range: $0 \leq k-1 \leq
2n-2$, in the sense of
\cite{citeSavelyevBottperiodicityandstablequantumclasses.},
of rational homotopy groups of $PSU (n)$. The assertion that quantum classes
detect homotopy groups of $PSU (2),
PSU (3)$ then immediately follows from the main theorem of
\cite{citeSavelyevBottperiodicityandstablequantumclasses.}. 

Before going further on, we mention that we will follow the following
conventions: the homology is always over $ \mathbb{Q}$ unless specified
otherwise, and the quantum homology of a monotone symplectic manifold is also taken with
$\mathbb{Q}$ coefficients and with $\mathbb{Z}_{2}$ grading.

Let $(M, \omega)$ be a monotone symplectic manifold $\omega = c \cdot c_1 (TM)$,
with monotonicty constant $c>0$. Set $\hbar= min (c \cdot N, D (M))$  
where $N$ is the minimal positive Chern number $ \langle c_1 (TM), [u] \rangle
$ over all $u$ of $u ^{*} TM $ for $u: S ^{2}
\to M $, and where $D (M)$  is the
infinum over the positive Hofer length of non-contractible loops in
$\text{Ham} (M,\omega)$. If the above Chern numbers all vanish, set $\hbar
= D (M)$. If $\pi _{1} \text{Ham}(M, \omega)=0$, set $D (M)=\infty$.

\begin{theorem} Suppose we are given a  monotone symplectic manifold $ (M, \omega)$, for which
quantum classes detect rational homotopy groups, then $\hbar >0$ and the
inclusion $i: P _{\delta/3, +} (id, x)
\to P (id,x)$ vanishes on rational homotopy groups for $x \in B
(\hbar/2-\delta)$, for all $\delta >0$.
\end{theorem}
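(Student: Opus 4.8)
The plan is to reduce this statement about the inclusion of path spaces to a statement about the quantum characteristic classes $qc_k$ of the associated elements of $\pi_{k+1}(\Ham(M,\om))$, and then to invoke the detection hypothesis. The two numerical ingredients $cN$ and $D(M)$ making up $\hbar$ will play complementary roles: $cN$, the minimal energy of a nonconstant holomorphic sphere, will rule out the quantum (bubbling) corrections, while $D(M)$, the length gap in $\pi_1$, will pin down the classical contribution.

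First I would establish $\hbar>0$. Since $(M,\om)$ is monotone with $c>0$ and the minimal positive Chern number $N$ is a positive integer whenever the Chern numbers do not all vanish, we have $cN>0$; when they do all vanish, $\hbar=D(M)$. That $D(M)>0$ is the assertion that non-contractible loops in $\Ham(M,\om)$ have positive Hofer length bounded away from zero, and I would deduce it from the Seidel representation: a loop $\gamma$ of small positive Hofer length has Seidel element $S(\gamma)$ of correspondingly small Novikov valuation-distance from the unit, so that if its length is $<cN$ then, by discreteness of the length spectrum in the monotone case (areas lie in $cN\cdot\Z$), $S(\gamma)=1$; the detection hypothesis then forces the rationally essential such loops to be trivial, and discreteness of the spectrum yields the positive lower bound $D(M)>0$.

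Next comes the homotopy-theoretic reduction. Because $\Ham(M,\om)$ is connected, $P(id,x)$ is homotopy equivalent to the based loop space $\Om\Ham(M,\om)$, so $\pi_k(P(id,x))\cong\pi_{k+1}(\Ham(M,\om))$, and under this identification $qc_k$ is defined on $\pi_k(P(id,x))$. Given $a\in\pi_k(P_{\delta/3,+}(id,x))$, represented by an $S^k$-family $\{\gamma_s\}$ of $\delta/3$-minimizing paths from $id$ to $x$, I would form the associated family of loops relative to a fixed minimizing basepoint path, producing $\tilde a=i_*a\in\pi_{k+1}(\Ham(M,\om))$. The key elementary estimate is that the positive Hofer length of this family is strictly less than $\hbar$: each path contributes at most $(\hbar/2-\delta)+\delta/3$, and passing from paths to loops doubles the length, so the total is at most $\hbar-4\delta/3<\hbar$, the slack $4\delta/3$ guaranteeing a strict inequality.

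The heart of the argument, and the main obstacle, is the vanishing $qc_k(\tilde a)=0$. Following \cite{GS}, $qc_k(\tilde a)$ is computed by counting holomorphic sections of the Hamiltonian fibration over $S^{k+1}$ determined by $\tilde a$, and the energy of such a section splits into a vertical part, bounded by the positive Hofer length of the family, and the area $\om(A)$ of its sphere component. Since this length is $<cN$, no nonconstant sphere can bubble and every contribution with $\om(A)\geq cN$ vanishes, leaving only the classical ($A=0$) term; since the length is also $<D(M)$, the underlying loops are contractible, so this classical term coincides with that of the trivial family and hence vanishes for degree reasons when $k\geq 1$. Making this energy decomposition rigorous—in particular the transversality and compactness needed to identify the classical term—is where the real work lies, and it is essentially what is imported from \cite{GS}. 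Granting it, $qc_k(\tilde a)=0$, so by the detection hypothesis $\tilde a=i_*a=0$ in $\pi_{k+1}(\Ham(M,\om))\otimes\Q$; equivalently $i_*$ vanishes on rational homotopy groups, as claimed.
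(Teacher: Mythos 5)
Your main argument coincides with the paper's proof: both identify $P(id,x)$ with $\Omega\Ham(M,\omega)$ by concatenating with a fixed near-minimizing path $p_0$, bound the positive Hofer length of the resulting loops by $\hbar-4\delta/3<\hbar$, kill $qc_k$ of the associated class by playing the dimension count on contributing section classes against the monotonicity quantization (using length $<D(M)$ to trivialize the clutching bundles so that $\langle c_1(T^{vert}X_s),[u]\rangle<0$ forces $\langle\omega,[u]\rangle\le -cN\le-\hbar$) together with the energy--length inequality imported from the cited Lemma 3.2, and then invoke the detection hypothesis. Your bookkeeping of which tool kills which sector is reversed relative to the paper --- the energy inequality bounds $\omega(A)$ from \emph{below} and so excludes the classes with $\omega(A)\le -cN$ that the dimension count singles out, whereas the degree argument disposes of $A=0$ and the positive classes --- but all cases are covered by the same two tools, and since you defer the rigorous energy decomposition to the same reference as the paper does, this slip is immaterial.

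The one genuine gap is in your added argument for $\hbar>0$, a claim the theorem does assert but the paper's proof never addresses. Detection of \emph{rational} homotopy groups, and triviality of the Seidel element, cannot exclude non-contractible loops that are torsion in $\pi_1\Ham(M,\omega)$; such loops could a priori have arbitrarily small positive Hofer length, so your argument does not yield $D(M)>0$. As written, the deduction ``$S(\gamma)=1$ plus detection implies $\gamma$ is contractible'' only gives that $\gamma$ vanishes rationally, which is strictly weaker than what the definition of $D(M)$ (infimum over non-contractible loops) requires.
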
 
\begin{proof}  Here $P _{\delta/3,+}(id, x) $ denotes the space of paths minimizing the
   positive Hofer length functional: 
   \begin{equation*}
     L ^{+}(\gamma)=\int _{0}^{1} \max _{M}  H ^{\gamma}_{t}     dt,
   \end{equation*}
  up to $\delta /3$, where $H ^{\gamma} $ is the  generating function for $\gamma$
  normalized to have zero mean at each moment.
   Fix a
$\delta/3$-minimizing $p _{0} \in P (id, x)$. Given $f: S ^{k} \to P _{\delta/3,
+}
(id, x) $, we get a map $ \widetilde{f}: S ^{k} \to \Omega \text {Ham}(M, \omega)$, 
$\widetilde{f} (s) = f(s) \cdot p _{0} ^{-1} $, for $ \cdot$  the
concatenation product. Clearly the length of each loop $ \widetilde{f} (s)$ is
less than $\hbar$. But then by the proof of \cite[Lemma
3.2]{citeSavelyevVirtualMorsetheoryon$Omega$Ham$(Momega)$.}, $qc
_{k} ([ \widetilde{f}])$ vanishes. Let us explain this. The invariant $qc
_{k} ([ \widetilde{f}])$ is defined by counting pairs $ (u, s)$ 
 for $u$ a $J _{s}$-holomorphic section with some constraints, of the bundle $M
 \hookrightarrow X _{s} \to \mathbb{CP} ^{1}$, obtained by using $\widetilde{f} _{s}$ as a
clutching loop: $$X _{s} =
M \times D ^{2} \sqcup _{\widetilde{f}_s}  M \times D ^{2}, $$
 where $J_{s}$ is tamed by a symplectic form $\Omega_s$ on $X _{s}$, with both
 of these smoothly varying. Now $(u,s)$ can contribute to the invariant only if
 $$ \langle c_1 (T
^{vert} X _{s}), [u] \rangle <0,$$ for dimensional reasons that one can check easily. By
assumption, each $\widetilde{f}_{s}  $ is contractible and so 
as a smooth bundle $X _{s} \simeq M \times S^2$ this means that $(u,s)$ can contribute only if
$\langle c_1 (T ^{vert} X _{s}), [u] \rangle < - N$, so that $$ \langle \omega,
[u] \rangle < - \hbar,$$ where $\omega$ is the natural  form on $X _{f _{s}}$ under the identification:  $X _{s}
\simeq M \times  \mathbb{CP} ^{1}$. This is the identification induced by any chosen
contraction of $\widetilde{f}_{s}  $. The result is independent of the
identification as the Hamiltonian gauge group of $M \times \mathbb{CP}^{1} $ acts trivially on homotopy groups,
see \cite{citeKedraMcDuffHomotopypropertiesofHamiltoniangroupactions}. We can
also make this point more transparent by using coupling forms, but we avoid
introducing extra technology at this point.
Finally \cite[Lemma
3.2] {citeSavelyevVirtualMorsetheoryon$Omega$Ham$(Momega)$.} tells us that the length of the loop $\widetilde{f} (s)$ must
then be at least $ \hbar$.  
 
So we conclude that  $ \widetilde{f}$ is vanishing on rational homotopy groups,
but then clearly the same must hold for $i \circ f$. 
\end{proof}
The question of the injectivity radius can also be developed as follows. Given any
class $\alpha \in H_*(\Ham(M), \Z)$, define the { \it higher $\alpha$-capacity} of $(M, \om)$ as the infimum of $\rho$ such that there exists a class $\xi \in H_*(B_H(\rho), \Z)$ with 
   $\iota_*(\xi) = \alpha$.  Here $\iota$ is the injection of $B_H(\rho)$ inside $\Ham(M)$.  Given any such non-zero $\alpha$,
    the conjecture, if true, shows that the $\alpha$-capacity is not zero. On the other hand, the following proposition shows that it is bounded above. 
    
  \begin{prop} Given any closed symplectic manifold $(M, \om)$ and any class $\alpha \in  H_*(\Ham(M), \Z)$, there is $\rho \geq 0$ such that $\alpha$ is realized inside $B_H(\rho)$.
  \end{prop}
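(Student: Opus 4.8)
The plan is to represent $\alpha$ by a cycle with compact support and then observe that the Hofer norm is automatically bounded on that support. Since $H_*(\Ham(M),\Z)$ is singular homology, I would first fix a singular cycle $c=\sum_i n_i\,\sigma_i$ representing $\alpha$, where each $\sigma_i\colon \Delta^{k}\to\Ham(M)$ is a continuous singular simplex with respect to the $C^{\infty}$-topology. The support $S:=\bigcup_i \sigma_i(\Delta^{k})$ is then a finite union of images of compact sets, hence compact.

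The heart of the matter is to show that $\rho:=\sup_{\phi\in S}\|\phi\|_H$ is finite, i.e. that the Hofer norm is bounded on $C^{\infty}$-compact subsets of $\Ham(M)$. I would deduce this from local boundedness of $\|\cdot\|_H$ near the identity together with bi-invariance. First I would produce a $C^{\infty}$-neighborhood $W$ of the identity and a constant $C_0$ with $\|\psi\|_H\le C_0$ for all $\psi\in W$: for $\psi$ sufficiently $C^{\infty}$-close to $\mathrm{id}$, the graph of $\psi$ lies in a Weinstein neighborhood of the diagonal in $(M\times M,(-\om)\oplus\om)$ and, $\psi$ being Hamiltonian, is the graph of an exact one-form $dS_\psi$ with $S_\psi$ small in $C^{\infty}$; rescaling, $S_t:=tS_\psi$ generates a Hamiltonian isotopy from $\mathrm{id}$ to $\psi$ whose generating Hamiltonian has mean oscillation controlled by the $C^{\infty}$-size of $\psi$, hence uniformly bounded on a small enough $W$. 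Since the Hofer metric is bi-invariant, for any $\phi_0\in\Ham(M)$ and $\psi\in W$ one has $\|\phi_0\psi\|_H\le\|\phi_0\|_H+\|\psi\|_H\le\|\phi_0\|_H+C_0$, and $\phi_0 W$ is a $C^{\infty}$-neighborhood of $\phi_0$ because left translation is a homeomorphism. Thus $\|\cdot\|_H$ is locally bounded, and extracting a finite subcover of $S$ by such translated neighborhoods yields the desired finite bound $\rho$.

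Once $\rho$ is in hand the conclusion is essentially formal. Since every simplex $\sigma_i$ has image in $B_H(\rho)$, the chain $c$ lies in the singular chain complex $C_*(B_H(\rho))$, where $B_H(\rho)$ carries the subspace topology; as the boundary operator is computed simplicially, $c$ remains a cycle there and defines a class $\xi:=[c]\in H_*(B_H(\rho),\Z)$. By construction $\iota_*(\xi)=[c]=\alpha$ in $\Ham(M)$, which is exactly the assertion that $\alpha$ is realized inside $B_H(\rho)$. (If $\alpha$ is inhomogeneous one simply applies this to each graded piece and takes the largest of the finitely many radii obtained.)

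I expect the main obstacle to be the local boundedness step, that is, verifying that $C^{\infty}$-small Hamiltonian diffeomorphisms have uniformly bounded Hofer norm; this is where the Weinstein/generating-function argument is needed, and it is the only genuinely Hofer-geometric input. The smoothness of the topology on $\Ham(M)$ is used in an essential way here, through the construction of the controlled generating Hamiltonian. Everything else — representing $\alpha$ by a compactly supported cycle, extracting a finite bound, and re-reading the cycle inside the sublevel set $B_H(\rho)$ — is soft point-set topology.
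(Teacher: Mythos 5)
Your proposal is correct and follows essentially the same route as the paper: represent $\alpha$ by a cycle with compact image, use local boundedness of the Hofer norm to cover that image by finitely many $C^{\infty}$-open sets on which the norm is bounded, and read the cycle inside the resulting $B_H(\rho)$. The only difference is that the paper simply invokes the fact that the $C^{\infty}$-topology is finer than the Hofer topology, whereas you prove that fact in detail via the Weinstein-neighborhood/generating-function argument; both are fine.
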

  
  \proof  The image of a cycle $\alpha$ is a compact subset $K$ of $\Ham(M)$.
  Suppose that $\phi$ belongs to $K$, with Hofer norm of $\phi$ denoted $E(\phi)$.
  Consider the ball of radius $\eps$ in Hofer's norm centered at $\phi$. It contains an open set $U(\phi)$ centered at $\phi$ in the $C^{\infty}$-topology because the $C^{\infty}$ topology is finer than the Hofer topology. By the triangle inequality, the elements of $U(\phi)$ have Hofer norm at most $E(\phi) + \eps$. Because $K$ is compact, there is a finite collection of these open sets. \QED

   Thus higher capacities belong to $(0, \infty)$ if the conjecture is true. 
   For $M = S^2$, we show that:
   
   \begin{prop} There is $\rho$ small enough so that the injection $B_H(\rho) \to \Ham(S^2)$ does not catch any of the $\Z$-generators of the $\Z$-homology of $ \Ham(S^2)$.
 \end{prop}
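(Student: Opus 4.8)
The plan is to reduce to the behaviour of $\iota_*$ in degree three and then feed the quantum-class mechanism of Theorem 1 with an energy bound coming from the smallness of $\rho$. First I would invoke Smale's theorem: $\Ham(S^2)$ retracts onto the rotation subgroup $PSU(2)\cong SO(3)\cong \mathbb{RP}^3$, whose integral homology is $\Z$ in degree $0$, $\Z/2$ in degree $1$, $0$ in degree $2$ and $\Z$ in degree $3$. The only free generators, i.e. the only $\Z$-generators, therefore sit in degrees $0$ and $3$; the degree-one class is $2$-torsion, and the degree-zero generator is caught by every nonempty ball. Hence the entire content of the statement is to produce a threshold $\rho_0>0$ so that, for $\rho<\rho_0$, the fundamental class $\alpha\in H_3(\Ham(S^2);\Z)$ is not in the image of $\iota_*\colon H_3(B_H(\rho);\Z)\to H_3(\Ham(S^2);\Z)$.

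Second, I would pass from homology to homotopy rationally. Since the universal cover of $SO(3)$ is $SU(2)=S^3$ and $\pi_2(SO(3))=0$, the rational Hurewicz map $\pi_3(\Ham(S^2))\otimes\Q\to H_3(\Ham(S^2);\Q)$ is an isomorphism; let $a$ be a generator of $\pi_3(\Ham(S^2))$ whose Hurewicz image is the rational fundamental class. If some integral $\xi\in H_3(B_H(\rho);\Z)$ satisfied $\iota_*\xi=\alpha$, then $\iota_*(\xi\otimes\Q)$ would be a nonzero multiple of the rational generator, so it suffices to show that $\iota_*$ is zero on $H_3(\,\cdot\,;\Q)$ for $\rho$ small. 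The point of working with $a$ is that, because $\Ham(S^2)\simeq PSU(2)$ and degree three lies in the stable range $0\le k-1\le 2n-2$ for $PSU(n)$ with $n=2$, the detection hypothesis gives $qc_2(a)\neq 0$ in $QH(S^2)$.

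Third comes the engine, reusing the argument of Theorem 1. A family of loops in $\Omega\Ham(S^2)$ all of Hofer length strictly less than $\hbar$ has vanishing quantum class $qc_2$: the $J_s$-holomorphic sections that could contribute must have negative vertical Chern number, hence, after trivializing the necessarily contractible clutched bundles $X_s$, symplectic area less than $-\hbar$, which is impossible for loops shorter than $\hbar$. The plan is thus to manufacture, out of a $\rho$-small cycle realizing $\alpha$, precisely such a low-energy family carrying $a$; its forced vanishing $qc_2$ would then contradict $qc_2(a)\neq 0$, and the contradiction persists exactly as long as $\rho$ stays below a threshold of order $\hbar$ (the analogue of the $\hbar/2$ appearing in Theorem 1). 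Positivity of $\hbar$, established there, is what makes $\rho_0>0$.

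The hard part is this last manufacturing step: a homological realization in $B_H(\rho)$ controls only the Hofer norm of each individual diffeomorphism in the (compact) image of the cycle, whereas the quantum count is sensitive to the Hofer length, equivalently the symplectic area, of whole families of loops. I would bridge the gap by choosing, for each $\phi$ in the image, a Hamiltonian path to the identity of length at most $\rho+\eps$, and by splicing these with the rational Hurewicz sphere representing $a$ into the clutching construction, so as to obtain loops whose lengths are bounded by a fixed multiple of $\rho$. Two delicate points remain: making the choice of short paths coherent (continuous) over the cycle, so that a genuine family of bundles is produced, and verifying that this family really detects $a$ rather than a trivial class, so that its vanishing quantum class is a true contradiction. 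This is where I expect the main technical work to lie.
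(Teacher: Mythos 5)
Your overall strategy---reduce to degree $3$ via Smale's theorem and then try to kill the class with the quantum-class machinery of Theorem 1---is genuinely different from what the paper does, and it has a gap that is not a technicality but the whole difficulty. The quantum-class argument applies to \emph{families of loops} of small Hofer \emph{length}, whereas a $3$-cycle in $B_H(\rho)$ only controls the Hofer \emph{norm} of each individual diffeomorphism in its image. To feed your cycle into the clutching construction you would need a continuous, coherent choice of short Hamiltonian paths from the identity to every point of the cycle, i.e.\ a section over the cycle of the path-space fibration with uniform length control. You flag this yourself as ``where the main technical work would lie,'' but that work is essentially the strong form of the conjecture the paper is circling around (compare Theorem 1, which for exactly this reason is stated for the path spaces $P_{\delta/3,+}(id,x)$ rather than for $B_H(\rho)$, and Theorem \ref{lemma.simply.con}, which constructs such a section only over a \emph{loop}, where one can proceed segment by segment). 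As written, the proposal therefore does not prove the statement. A secondary point: you discard the degree-one class as torsion, but the paper treats the generator of $H_1(\Ham(S^2);\Z)\cong\Z/2$ as one of the generators to be excluded, and---more importantly---that degree-one statement is the engine of its degree-three argument, so it cannot be skipped.

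The paper's actual route is elementary and avoids quantum classes entirely. First, Theorem \ref{Thm:simply-connected} (whose hypothesis holds for $S^2$ by the Lalonde--McDuff lower bound $D$ on the Hofer length of non-contractible loops) gives $\rho_0>0$ such that every loop in $B_H(\rho_0)$ is contractible in $\Ham(S^2)$; hence no $1$-cycle in the ball is homologous to the generator of $\pi_1(SO(3))$. Then, for degree three, one assumes a $4$-chain $c$ with $\partial c=[SO(3)]-d$, $d$ a $3$-cycle in the ball, realizes $c$ by a simplicial complex mapped to $\Ham(S^2)$, composes with the retraction onto $SO(3)$, smooths, and takes the preimage of a transverse curve representing the $\pi_1$-generator. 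The resulting weighted $2$-complex is a cobordism between a $1$-cycle mapping to that generator and a $1$-cycle landing in $B_H(\rho_0)$, contradicting the degree-one step. If you want to salvage your approach, you would either have to solve the coherent-short-path problem over a $3$-cycle, or adopt a reduction of this intersection-theoretic kind; the latter is what makes the paper's proof go through with no analytic input beyond the length bound $D$.
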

   
   \proof By remarks following Theorem \ref{Thm:simply-connected} 
    there is a $\rho_0>0$ such that all loops in $B_H(\rho_0)$ are contractible
    inside $\Ham(S^2)$. Thus  the generator $\ga$ of the fundamental group of
    $\Ham(S^2)$  is not homologous inside $\Ham(S^2)$ to a 1-cycle lying in
    $B_H(\rho_0)$ since otherwise, the concatenation of the chain realizing this
    homology with the disc realizing the homotopy to a point would give a chain
    in $\Ham(S^2) \simeq SO(3)$ realizing a homology between $\ga$ and a point,
    a contradiction. This proves our statement for the generator of the first
    cohomology group of $\Ham(S^2)$.
   
      Now suppose by contradiction that there is a singular 4-chain $c$ over the
      integers in $\Ham(S^2)$  such that $\p c = [SO(3)] - d$ where $d \in
      Z_3(B_H(\rho_0))$, $Z_3$ being the $3$-cycles over the integers. Let
      $(K,f)$ be the realization of $c$, i.e let $K$ be a compact simplicial
      complex and $f: K \to \Ham(S^2)$ be a continuous map that realizes the
      homology $c$. Thus $K$ is collection of simplices with integral
      coefficients, and with the incidence relations dictated by $c$. One may
      realize $K$ as a piecewise linear complex with integral coefficients in
      some $R^N$, for $N$ large enough. The boundary of $K$ is by definition the
      sum over all simplices of top dimension of the boundary of each such
      simplex with the coefficient coming from the simplex. By definition of
      $K$, this boundary is equal to $K_0 - K_1$ where the subcomplex $K_0$ of
      $K$ is an abstract  singular triangulation of $[SO(3)]$ and the
      restriction $f_0$ of the map $f$ to $K_0$ identifies $K_0$ with a singular
      triangulation of $[SO(3)] \subset \Ham (S^2)$,  while the restriction
      $f_1$ of $f$ identifies $K_1$ with the cycle $d$. Now let's compose $f$
      with the retraction  $r: \Ham(S^2) \to SO(3)$. This gives a mag $g: K \to
      SO(3) \subset \Ham(S^2)$. Now, $SO(3)$ is a smooth manifold and hence
      there is a continuous map $g': K \to SO(3)$ $C^0$-close to $g$ and
      homotopic to $g$ such that the restriction of $g'$ to each simplex is a
      smooth map. Thus, if $\ga$ is a smooth curve of $SO(3)$ representing the
      generator of $\pi_1(SO(3))$ and transverse to all simplices of $(K, g')$,
      the inverse image $g'^{-1}(\ga)$ of $\ga$ by $g'$ is a $2$-dimensional
      subspace of $K$ that can be represented, up to smooth subdivision of $K$,
      by a smooth subcomplex $L \subset K$ with weights (coming from each
      simplex) with boundary equal to $\Gamma - \Gamma'$ where $\Gamma$ is a
      $1$-cycle inside $K_0$ mapped to $\ga$ by $g'$ and where $\Gamma'$ is a
      subset of $K_1$.
     
         Finally, note that $f$ maps  the cycle $\Gamma$ to the $1$-cycle $\ga$
         of $SO(3)$ and the cycle $\Gamma'$ inside  $B_H(\rho)$. But $\Gamma$
         and $\Gamma'$ are homologous inside $K$, thus their images by $f$ are
         homologous inside $\Ham(S^2)$. This means that there is a $1$-cycle
         inside $B_H(\rho)$ that is homologous to the generator $\ga$ of
         $\Ham(S^2)$, a contradiction.

  \QED

%
%
%
%
%
%

    \begin{theorem} \label{lemma.simply.con} Let $D$ be the infimum over 
length of all  essential loops in a length space $X,g$, and let
$ B _{x_0}$ be the $ \epsilon$-ball around $x _{0}$ in $ X,g$ with $ \epsilon =
D/2 -\delta$. Then the inclusion of $B _{x _{0}}$ into $X$ vanishes on $ \pi_1$,
for all $ \delta>0$.
\end{theorem}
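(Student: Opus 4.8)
The plan is to contract an arbitrary based loop of the ball by cutting it into finitely many pieces and coning each piece off to $x_0$, thereby writing it, up to homotopy, as a product of short loops based at $x_0$, each of which is null-homotopic in $X$ for the trivial reason that its length falls below the systolic threshold $D$. Fix a class in $\pi_1(B_{x_0}, x_0)$ and represent it by a (rectifiable) loop $\gamma$ with image in $B_{x_0}$. If there are no essential loops at all then $\pi_1(X)=0$ and there is nothing to prove, while if $D=0$ then $\epsilon=-\delta<0$ and the ball is empty; so I may assume $0<D<\infty$.

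First I would subdivide. Choosing a partition $0=t_0<\cdots<t_n=1$ fine enough that each arc $\gamma_i:=\gamma|_{[t_{i-1},t_i]}$ has length less than $\delta/2$, and writing $p_i=\gamma(t_i)$ (so that $p_0=p_n=x_0$), I cut $\gamma$ into the arcs $\gamma_1,\dots,\gamma_n$. Since each interior point $p_i$ lies in $B_{x_0}$ we have $d(x_0,p_i)\le \epsilon=D/2-\delta<D/2-\delta/2$, so by the very definition of the length metric I may select a path $c_i$ from $x_0$ to $p_i$ whose length is strictly less than $D/2-\delta/2$; I let $c_0=c_n$ be the constant path at $x_0$.

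Next I form the lasso loops $\ell_i:=c_{i-1}\cdot\gamma_i\cdot\overline{c_i}$, each based at $x_0$, where $\overline{c_i}$ is the reversed path. The standard telescoping homotopy, in which each cancelling pair $\overline{c_i}\cdot c_i$ is contracted, shows that $\gamma$ is homotopic rel $x_0$, inside $X$, to the concatenation $\ell_1\cdot\ell_2\cdots\ell_n$; this step is purely topological and uses no metric input, and it is harmless that the $c_i$ may leave the ball since we only ask for a contraction in the ambient space. The metric enters only now: for interior $i$ the length of $\ell_i$ is at most $\mathrm{length}(c_{i-1})+\mathrm{length}(\gamma_i)+\mathrm{length}(c_i)<(D/2-\delta/2)+\delta/2+(D/2-\delta/2)=D-\delta/2<D$, and the boundary lassos are even shorter. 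Being loops of length strictly below $D$, none of the $\ell_i$ can be essential, so each is null-homotopic in $X$ and hence trivial in $\pi_1(X,x_0)$. Therefore $\gamma\simeq\ell_1\cdots\ell_n$ is null-homotopic in $X$, the inclusion-induced image of the class is zero, and since the class was arbitrary the inclusion vanishes on $\pi_1$.

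The step I expect to require the most care is precisely the length bookkeeping that forces every lasso strictly below $D$: this is where the hypothesis $\epsilon<D/2$ is essential, the slack $\delta$ being consumed partly by the excess of each connecting path $c_i$ over the radius and partly by the arc lengths, and one must keep all inequalities strict because $D$ is only an infimum. A secondary technical point is the passage to a rectifiable representative $\gamma$: in the Hamiltonian setting that motivates us every loop in question is smooth and hence rectifiable, so this is automatic, and in general one reads the statement over the rectifiable loops, which form the natural class on which a length metric is defined.
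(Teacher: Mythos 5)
Your proof is correct, but it takes a genuinely different route from the paper's. You use the classical ``lasso'' decomposition: subdivide $\gamma$ into arcs of length $<\delta/2$, join each subdivision point to $x_0$ by a path of length $<D/2-\delta/2$ (available by the definition of the length metric), and write $\gamma$ as a product of based loops each of length $<D$, hence each inessential; the length bookkeeping is exactly right, and your handling of the degenerate cases $D=\infty$ and $D=0$ is fine. The paper instead works with the space $E$ of paths from $x_0$ to points of the ball lying in the distance-minimizing homotopy class (uniquely determined because any two competitors of length $<D/2$ differ by a loop of length $<D$), pulls $E$ back over $\gamma$, and builds a section of the resulting fibration segment by segment using the homotopy extension property, finally contracting $\gamma$ through the associated continuous family of paths. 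The two arguments exploit the same metric mechanism --- loops shorter than $D$ cannot be essential --- but yours is more elementary, avoiding the fibration and homotopy-extension machinery, and produces the null-homotopy algebraically in $\pi_1$ rather than as an explicit family of connecting paths; the paper's construction is closer in spirit to its surrounding discussion of the path spaces $P_\delta(id,x)$ and yields a concrete contracting family. Both proofs implicitly require the representative loop to be rectifiable (the paper also partitions $\gamma$ by arclength), so your closing caveat about rectifiable representatives applies equally to the original argument and is not a defect of yours.
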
 
\begin{proof} Let $E$ be the space of paths in $X$ starting  at  a fixed $x_0
\in B$ and ending at some $ b \in B$, in the minimizing homotopy class relative to  endpoints, that is to say in the class $[p]$, so that $d(x,b)$ is the infimum
over length of all paths in  $ [p]$. By our assumptions and because the
shortest length of an essential loop is at least $D$, this class is uniquely
determined. For a loop $\gamma: S ^{1} \to
B$, consider the pullback  $E' = \gamma ^{*} E$ of $ E \to B$. We
show that $ E'$ has a section over $ S ^{1} = [0,1]/ \sim$. Set $ b_0 = \gamma (0)$ and let
 $p_0$ be any path from $ id$ to $ b_0$ minimizing  length up to $
\delta/3$, we will just say $ \delta/3$ minimizing from now on. In particular $
p_0$ is in the fiber of $ E$ over $b_0$ -- if it were in the wrong homotopy class
its length would be at least $D/2$. Partition $ S ^{1}$ into segments $ s _{i}$ so that the length 
of each segment $\gamma|{s_i}$ be at most $ \delta/3$. For simplicity say there are 2
segments. Then we have a canonical section of $
E'$ over $ s_0$, which is $ p_0$ over 0, and over $ t \in s_0$ it is $ p
(t)$ defined by concatenating $ p_0$ with $ \gamma| _{[0,t]}$. Since each $ p
(t)$ has length less than $D/2$ it is in the right class, since otherwise its 
length would be at least $D/2$. But since each $ p (t)$ is in the right class
we can change this section (homotopy extension property) so that over the right
end point $ t= 1/2$, $ p (1/2)$ again minimizes  length up to $ \delta/3$. Repeating this we get a
section of $ E'$ which is 2 valued only over $ 0$, but since the fiber is connected we 
can adjust it to be an actual section. Given this section, we can
contract $ \gamma$ by the associated family of paths.
\end{proof}  
As a corollary we have:
\begin{theorem} \label{Thm:simply-connected} Let $(M, \om)$ be a symplectic manifold such that 
there is a lower bound $D$ for the Hofer length of a non-contractible loop in
$\Ham(M)$. Then the inclusion of $B_H(\rho)$ into $\Ham(M)$ vanishes on $
\pi_1$ for $\rho=D/2 - \eps$ for all $\eps > 0$.
  \end{theorem}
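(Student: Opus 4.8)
The plan is to deduce this directly from Theorem \ref{lemma.simply.con} by taking the length space $X$ of that theorem to be $\Ham(M)$ equipped with the Hofer metric, based at the identity. The only thing that really needs checking is that $(\Ham(M), d_H)$ genuinely falls under the hypotheses of Theorem \ref{lemma.simply.con}; once that is in place, the conclusion is immediate.

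First I would verify that $(\Ham(M), d_H)$ is a length space, where $d_H(\phi,\psi)$ is the Hofer norm of $\phi^{-1}\psi$. This is built into the definition of the Hofer norm recalled above: $d_H(\id,\phi)$ is by definition the infimum, over all Hamiltonians $H$ whose time-one flow is $\phi$, of the length functional $\int_0^1(\max_M H_t - \min_M H_t)\,dt$, and each such $H$ is the generating function of a path from $\id$ to $\phi$ whose length in the length-space sense is exactly this integral. By bi-invariance of the Hofer metric the same holds between any two elements, so $d_H(\phi,\psi)$ equals the infimum of lengths of paths from $\phi$ to $\psi$, which is precisely the defining property of a length space. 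I would emphasize here that we do not need minimizing paths to exist: Theorem \ref{lemma.simply.con} was stated in terms of $\delta$-minimizing paths exactly so as to avoid any appeal to geodesic completeness, which is unavailable in this infinite-dimensional setting.

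Next I would match the remaining data. The essential, i.e.\ non-contractible, loops of $X=\Ham(M)$ are exactly those representing nontrivial classes in $\pi_1(\Ham(M))$, and the length of such a loop in the length-space sense coincides with its Hofer length; hence the infimum $D_{\inf}$ of lengths of essential loops of $X$ is bounded below by the given quantity $D$. Taking $x_0 = \id$, the ball $B_{x_0}$ of Theorem \ref{lemma.simply.con} is precisely the Hofer ball $B_H(\rho)$ around the identity. Then, given $\rho = D/2 - \eps$ with $\eps>0$, I would set $\delta = D_{\inf}/2 - \rho$, which satisfies $\delta \geq \eps > 0$ because $D_{\inf}\geq D$, so that $\rho = D_{\inf}/2 - \delta$. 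Applying Theorem \ref{lemma.simply.con} to $X=\Ham(M)$ with this $\delta$ then yields that the inclusion $B_H(\rho)\hookrightarrow \Ham(M)$ vanishes on $\pi_1$, as claimed.

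The main obstacle is thus rather minor and purely a matter of bookkeeping: confirming the length-space axiom for $d_H$ and the identification of the Hofer length of a loop with its length as a path in the length space. I expect no genuine difficulty, since the Hofer geometry is designed exactly so that these identifications hold; the substantive content has already been isolated into the abstract statement Theorem \ref{lemma.simply.con}, and the present corollary is then formal.
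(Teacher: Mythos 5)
Your proposal is correct and follows exactly the route the paper intends: Theorem \ref{Thm:simply-connected} is stated there as an immediate corollary of Theorem \ref{lemma.simply.con}, obtained by taking $X = \Ham(M)$ with the Hofer metric based at the identity, and the paper supplies no further argument beyond that specialization. Your extra bookkeeping (verifying the length-space property of $d_H$ and matching $\delta$ with $\eps$) only makes explicit what the paper leaves implicit.
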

  
  By Lalonde-McDuff results in \cite{LM}, any ruled symplectic $4$-manifold or any surface satisfies this hypothesis.
     
        Let us now consider the conjecture stating  that  the space $B_H(\rho)$ of all Hamiltonian diffeomorphisms of $S^2$ of Hofer norm less or equal to $\rho$ is contractible inside $\Ham(M)$, for $\rho$ small enough (weak conjecture). Let us denote by $L(\rho)$ the topological space of all images of the standard oriented equator $L \subset S^2$ by  Hamiltonian diffeomorphisms of Hofer's norm less or equal to $\rho$. So $L(\rho)$ is included in the space $L(\infty)$ of smooth oriented embedded loops that divide the sphere into two regions of equal areas.
        
        \begin{proposition} If $L(\rho)$ is contractible in $L(\infty)$, then
        the space $B_H(\rho)$ is also contractible in $\Ham(M)$.
        \end{proposition}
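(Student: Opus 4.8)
The plan is to exhibit both inclusions as compatible with the evaluation map $\ev\colon \Ham(S^2)\to L(\infty)$, $\phi\mapsto \phi(L)$, and then run a fibration argument. First I would check that $\ev$ is a surjective, locally trivial fibration. Surjectivity is transitivity of the action: by Moser's theorem any smooth embedded loop cutting $S^2$ into two regions of equal area is carried to the equator $L$ by an area-preserving diffeomorphism isotopic to the identity, hence by an element of $\Ham(S^2)$ (after fixing orientations). Local triviality comes from Moser-type local sections in a tubular neighbourhood of $L$, sending a nearby equal-area loop $L'$ to a Hamiltonian diffeomorphism supported near $L$ that takes $L$ to $L'$, depending smoothly on $L'$. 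Since by definition $\ev(B_H(\rho))=L(\rho)$, the map $\ev$ restricts to a commuting square whose horizontal arrows are the inclusions $B_H(\rho)\hookrightarrow\Ham(S^2)$ and $L(\rho)\hookrightarrow L(\infty)$ and whose vertical arrows are evaluation.

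Next I would identify the fibre $G_L=\ev^{-1}(L)$, the subgroup of Hamiltonian diffeomorphisms preserving the oriented equator. Such a map preserves each closed hemisphere and restricts on $L$ to an orientation-preserving circle diffeomorphism; fibering $G_L$ over $\Diff^+(S^1)\simeq S^1$ by $\phi\mapsto\phi|_L$, the fibre consists of area-preserving diffeomorphisms of the two hemispheres fixing the boundary pointwise, a contractible space. Hence $G_L\simeq S^1$. Under the Smale equivalence $\Ham(S^2)\simeq PSU(2)=SO(3)$ recalled earlier, this circle is the rotation subgroup $SO(2)\subset SO(3)$, whose inclusion is surjective on $\pi_1=\Z/2$. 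This non-contractibility of the fibre is the crux of the proof.

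Given the hypothesis that $L(\rho)\hookrightarrow L(\infty)$ is null-homotopic, the composite $\ev\circ j\colon B_H(\rho)\to L(\infty)$ is null-homotopic, since it factors through $L(\rho)$. Feeding a null-homotopy of $\ev\circ j$, together with the initial lift $j$, into the homotopy lifting property of the fibration $\ev$, I obtain a homotopy inside $\Ham(S^2)$ from $j$ to a map $g\colon B_H(\rho)\to G_L$ landing in the fibre. It therefore suffices to show that this $g$ is null-homotopic in $\Ham(S^2)\simeq SO(3)$. Because $g$ factors through $G_L\simeq S^1$, the composite $B_H(\rho)\to S^1\hookrightarrow SO(3)$ is null-homotopic as soon as it is trivial on $\pi_1$: triviality on $\pi_1$ lets $g$ lift to the universal cover $S^3=SU(2)$, where it still factors through a circle and hence is null-homotopic since $S^3$ is $2$-connected; projecting the null-homotopy back down kills $g$.

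The one remaining point is exactly that $g_*=j_*$ is trivial on $\pi_1(B_H(\rho))\to\pi_1(SO(3))$, and this is precisely what Theorem \ref{Thm:simply-connected} supplies, for $\rho=D/2-\eps$. I expect the main obstacle to be this interplay with the non-contractible fibre: the contraction of $L(\rho)$ controls $B_H(\rho)$ only up to the $S^1$ of the fibre, so one genuinely needs the independent $\pi_1$-vanishing of Theorem \ref{Thm:simply-connected} (hence $\rho$ small) to finish. The loop of rotations about the polar axis, which maps to a single point of $L(\infty)$ yet generates $\pi_1(\Ham(S^2))$, shows that the fibre cannot be ignored and that some such smallness hypothesis on $\rho$ is unavoidable. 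Secondary technical checks are that $\ev$ is a genuine (Serre) fibration and that $B_H(\rho)$ is sufficiently well behaved for the lifting arguments, that is, path-connected, locally path-connected, and of the homotopy type of a CW complex.
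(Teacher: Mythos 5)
Your proposal is correct and follows essentially the same route as the paper: the evaluation fibration $\Ham(S^2)\to L(\infty)$ restricted to $B_H(\rho)\to L(\rho)$, the identification of the fibre with $S^1$, the contractibility of the base to push everything into a single fibre, and Theorem \ref{Thm:simply-connected} to kill the resulting $\pi_1$ obstruction. If anything, your covering-space argument for contracting the map into the $S^1$-fibre spells out a step the paper leaves implicit.
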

        
        \proof
        Consider the map $B_H(\rho) \to L(\rho)$ that assigns to each
        diffeomorphism $\phi$ the image under $\phi$ of the standard oriented
        equator in $S^2$. It is not hard to verify that 
        this is a Serre fibration. And this is a sub fibration of the Serre fibration $\Ham(M) \to L(\infty)$. The fiber of this sub fibration is the space of all Hamiltonian diffeomorphisms of Hofer norm less or equal to $\rho$ that preserve, not necessarily pointwise, the equator. 
        It is well-known that the same space, but with no restriction on the energy, is homotopy equivalent to $S^1$ since it retracts to the space of rotations of the closed disk. 
       By our Theorem \ref{Thm:simply-connected},  the inclusion of the fiber of $B_H(\rho) \to L(\rho)$  inside the total space $\Ham(M)$ is contractible inside $\Ham(M)$. Now use the fact that the base $L(\rho)$ is contractible inside $L(\infty)$ to retract each of the fibers of $B_H(\rho) \to L(\rho)$ onto the fiber at a point $L' \in L(\rho) \subset L(\infty)$. This retraction takes place in $\Ham(M)$. Then compose with the retraction of that fiber to a point inside $\Ham(M)$. 
        \QED
        
         So the problem of the contractibility of $B_H(\rho)$ inside $\Ham(M)$ reduces to the problem of the contractibility of $L(\rho)$ inside $L(\infty)$.
        
        \section{The space $L(\rho)$ and the double octopus}
        
    The main question is:  is it possible to find a $2$-cycle inside $L(\rho)$, 
       for arbitrarily small $\rho$, homologous in $L(\infty)$ to the
       $S^2$-cycle made of linear Lagrangians ?
      
      One way of approching that question is to look for the obstructions in designing an algorithm that would retract all exact Lagrangians sufficiently close to the standard oriented equator $L$ to $L$. This is what we do in this section. 
       
    Each $L' \in L(\rho)$ comes with an orientation that defines two sides $H_+(L')$ and $H_-(L')$ (H for ``hemisphere''). Here  $H_+(L)$ and $H_-(L)$ are the standard upper and lower hemispheres.  Set: 
        
        $$
        R_+  = H_-(L') \cap H_+(L)  $$
        
     $$   R_-  =  H_+(L') \cap H_-(L)   $$
        
      $$  G_+ =  H_+(L') \cap H_+(L)  $$
        
      $$  G_- =  H_-(L') \cap H_-(L)
        $$
          
 \begin{lemma} Each connected component $R$ of $R_+$ or $R_-$ has area bounded above by $\rho$.
 \end{lemma}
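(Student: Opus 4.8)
The plan is to realize each component $R$ as a \emph{displaceable} disk and then invoke a two-dimensional area-energy inequality. Fix $\phi \in \Ham(S^2)$ with $\phi(L) = L'$ and Hofer norm $E(\phi) \le \rho$, so that $H_+(L') = \phi(H_+(L))$ and $H_-(L') = \phi(H_-(L))$. Consider a connected component $R$ of, say, $R_+ = H_-(L') \cap H_+(L) = \phi(H_-(L)) \cap H_+(L)$; the argument for $R_-$ is identical. The first and key observation is that $\phi^{-1}$ \emph{displaces} $R$: by construction $R \subset H_+(L)$, while
\[
\phi^{-1}(R) = H_-(L) \cap \phi^{-1}(H_+(L)) \subset H_-(L),
\]
and the open upper and lower hemispheres are disjoint, so the interiors of $R$ and $\phi^{-1}(R)$ do not meet. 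Since $\phi^{-1}$ displaces $R$ and has Hofer norm $E(\phi^{-1}) = E(\phi) \le \rho$, the displacement energy $e(R) := \inf\{E(\psi) : \psi(R)\cap R = \emptyset\}$ satisfies $e(R) \le \rho$.

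It then remains to bound the area of $R$ by its displacement energy. Here I would first record that each component $R$ is a disk: after a $C^\infty$-small perturbation of $\phi$ (which changes all the areas involved by an arbitrarily small amount), the smoothly embedded circles $L$ and $L'$ meet transversally and hence cut $S^2$ into open topological disks, of which $R$ is one; since the labels $R_\pm, G_\pm$ alternate across each boundary arc, $R$ is a single such disk rather than a union of several. For a connected, simply connected region the relevant input is the sharp inequality $\area(R) \le e(R)$, obtained from the energy-capacity inequality together with the fact that the Hofer--Zehnder capacity of a simply connected planar domain equals its area. Combining this with the previous paragraph gives $\area(R) \le e(R) \le \rho$, as claimed, and letting the perturbation tend to $0$ recovers the estimate for the original $L'$.

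The hard part, and the reason I would treat the displacement argument as primary, is getting the area-energy inequality with the correct constant $1$. The naive energy-capacity bound phrased through the Gromov width is too weak: a long thin ``banana'' disk of large area has small Gromov width, so that route yields only $e(R) \ge \mathrm{width}(R)$, not $e(R) \ge \area(R)$. What rescues the estimate is that $R$ is connected and simply connected --- the example of two far-apart disks of total area $2a$ that can be displaced simultaneously with energy $a$ shows that connectivity is genuinely essential --- so that its Hofer--Zehnder capacity is exactly its area. Alternatively, and more in the spirit of this section, one can try to avoid capacities by a direct swept-area computation: writing $H_t$ for a normalized generator of $\phi_t$ with $\int_0^1 (\max_M H_t - \min_M H_t)\,dt \le \rho + \eps$ and $\Gamma(t,\theta) = \phi_t(L(\theta))$, one has $\Gamma^* \om = \partial_\theta(H_t \circ \Gamma)\, d\theta \wedge dt$, so that the area swept across any arc of the moving curve is controlled by $\int_0^1 (\max_M H_t - \min_M H_t)\,dt$. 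The delicate point in that second approach is to match the \emph{signed} swept area of the appropriate boundary arc with the \emph{actual} area of the single component $R$ without cancellation, which is exactly the technical difficulty the displacement argument sidesteps.
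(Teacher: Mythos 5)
Your proposal is correct and follows the same route as the paper: lift $L'$ to a Hamiltonian diffeomorphism $\phi$ of energy at most $\rho$, observe that $\phi^{-1}$ sends $R \subset H_+(L)$ into $H_-(L)$ and hence displaces it, and conclude by the energy-capacity inequality. Your extra discussion of why the constant is sharp (using that a connected, simply connected planar domain has displacement energy equal to its area, rather than the weaker Gromov-width bound) is a genuine refinement of a point the paper's one-line appeal to ``the energy-capacity inequality'' leaves implicit, but the overall argument is the same.
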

 
     \proof  Lift $L'$ to some Hamiltonian diffeomorphism $\phi$ of energy less or equal to $\rho$. Then $\phi^{-1}$ sends each connected component $R$ of $R_+$ to the lower standard hemisphere. But such a connected component lives in the upper standard hemisphere, and so $\phi^{-1}$ displaces it. Thus, by the energy-capacity inequality, $\rho$ is greater or equal to the area of $R$. The same argument applies as well if $R$ is a component of $R_-$. 
     \QED
     
        In order to prove that $L(\rho)$ is contractible in $L(\infty)$, one would like to define an algorithm that retracts $L'$ to $L$ inside $L(\infty)$ in a canonical way, i.e in a way that depends continuously on $L'$.    Note that $L(\infty)$ contains the space $LL$ of linear Lagrangians, i.e the one consisting of all oriented great circles. This space is identified to $S^2$ in the obvious way, with the south pole of $LL$ being the standard equator $L$ and the north pole being the same equator with the opposite orientation $L^{opp}$. On $LL -  \{L^{opp}\}$, there is a retraction to $L$ given by reducing simultaneously  the areas of the two 2-gones $R_+$ and $R_-$.  Of course, the continuity of this argument breaks down at $L^{opp}$ since the direction of the retraction depends on the slight perturbation of $L^{opp}$ inside $LL$ that one chooses. Note of course that, near $L^{opp}$, the regions $R_+$ and $R_-$ are big and such configurations cannot appear in $L(\rho)$ for small enough $\rho$.  
        
        However one can first slightly perturb $L^{opp}$ so that it be the graph $L'$ of a small sinusoidal function of $L$.  So, if the number of points in $L \cap L'$ is $2k$, the 2-sphere decomposes into a large $2k$-gone $R_+$ in the upper hemisphere, a large $2k$-gone $R_-$ in the lower hemisphere, and a sequence of small alternating 2-gones of $G_+$ and $G_-$. Note that $L'$ is in general position with respect to $L$. Now any reasonable algorithm would retract $L'$ to $L^{opp}$ and would break there.  However, by the energy-capacity inequality, such a configuration $L'$ cannot belong to $L(\rho)$. The last step is to start with $L'$ and inflate each $2$-gone of $G_+ \cup G_-$ in its own hemisphere in such a way that, at the end of this inflation, we get an element $L''$ of $L(\rho)$ which has both a $(\Z/k \Z)$-symmetry and a $\Z_2$-symmetry and is made of: a $2k$-gone in $R_+$ whose center is at the north pole, which is an arbitrarily small thickening of a star with $k$ branches, the end of each branch being on the equator; a $2k$-gone in $R_-$ whose center is at the south pole, which is an arbitrarily small thickening of a star with $k$ branches and such that the ends of the branches in  $R_-$ meet the equator at mid-points between the ends of branches of $R_+$; a sequence of large $2$-gones alternating between  $G_+$ and $G_-$. Each $2$-gone in $G_-$ can be viewed as the prolongation of a branch of $R_+$ as the branch crosses the equator, and similarly for $G_+$. This is what we call the {\em double octopus}. It is made of two octopuses, one based at the north pole and the other at the south pole, with very thin body and legs, both with large feet (a foot is the part of a leg that crosses the equator). 
        
        \includegraphics[scale=.5]{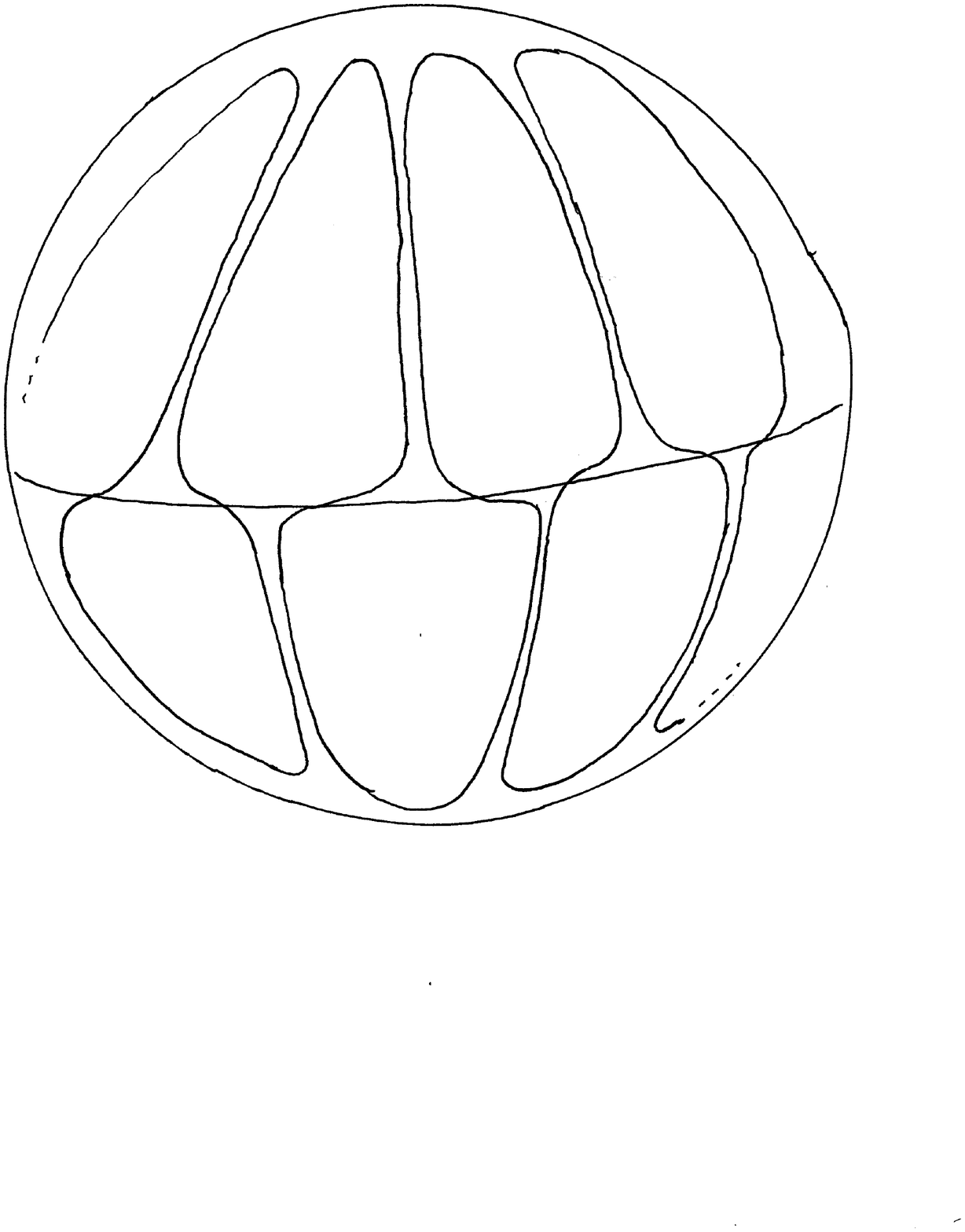}

   Denote by $\mathcal{O}_{k,a}$ this oriented exact Lagrangian of $S^2$: here $k$ is the number of legs of each of the two octopuses, and $a$ is the area of the intersection of the standard upper hemisphere with the upper  octopus (that is to say the one based at the North pole). Thus $k$ may be as large as we wish and $a$ as small as we wish.    Hence $\mathcal{O}_{k,a} \in L(\infty)$ has all the properties of an element of $L(\rho)$ for small $\rho$: both of its $R_+$ and $R_-$ are arbitrarily small.
 If $\mathcal{O}_{k,a}$ belongs to $L(\rho)$ for small $\rho$, there is not much hope to construct a retraction of $L(\rho)$ to $L$ inside $L(\infty)$. 
 
         Khanevsky and Zapolsky \cite{KZ} observed that actually the double octopus does not constitute a counter-example to the main conjecture of this paper. Here is their observation:

        \begin{proposition} For each small  enough $\rho$,  the double octopus configuration $\mathcal{O}_{k,a}$ (whatever the value $k \ge 2$ and for each $a$ small enough) does not lie in $L(\rho)$.
        \end{proposition}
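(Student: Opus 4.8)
The plan is to show a uniform positive lower bound on the Hofer energy of any $\phi$ realizing the octopus. Concretely, if $\mathcal{O}_{k,a} \in L(\rho)$ then by definition there is $\phi \in \Ham(S^2)$ with $E(\phi) \le \rho$ and $\phi(L) = \mathcal{O}_{k,a}$, so it is enough to exhibit a constant $\rho_0 > 0$, independent of $k \ge 2$ and of $a$, with $E(\phi) \ge \rho_0$. Before doing so I want to stress why the elementary argument behind the Lemma above cannot suffice. Applying the energy--capacity inequality to $\phi^{-1}$ only displaces the connected components of $R_+$ and $R_-$, whose areas are at most $a$; the feet $G_+$ and $G_-$ carry almost all of the area but lie, together with their images under the relevant power of $\phi$, on a single side of the equator, so they overlap those images in almost their full area and are \emph{not} displaced. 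In fact the symmetric difference between $H_\pm(L)$ and $\phi(H_\pm(L)) = H_\pm(\mathcal{O}_{k,a})$ is exactly the two thin stars, of total area $2a \to 0$. Thus $\mathcal{O}_{k,a}$ is $L^1$--close to $L$ while, as we must prove, being Hofer--far: no area or capacity estimate alone can detect the obstruction, which is consistent with the Lemma and is precisely what makes the statement delicate.

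The obstruction is Floer-theoretic, and I would extract it from the fact that the equator is the monotone Lagrangian circle in $S^2$, with non-vanishing self-Floer homology $HF(L,L)\ne 0$. The Chekanov--Hofer (Lagrangian Hofer) pseudo-distance satisfies $\delta(L,\phi(L)) \le E(\phi)$, and $\delta(L,\mathcal{O}_{k,a})$ is bounded below by a Lagrangian spectral invariant, equivalently by the boundary depth of the Floer complex generated by the intersection points of $L \cap \mathcal{O}_{k,a}$. I would compute this from the action (enclosed-area) filtration: traversing $L$, each passage through a foot changes the action by that foot's area, while each passage through a star changes it by at most $a$. The feet account for essentially a full hemisphere of area and are arranged coherently by the $\Z_2$-- and $(\Z/k\Z)$--symmetry of $\mathcal{O}_{k,a}$, so the total oscillation of the action along $L$ is bounded below by a fixed fraction of the total symplectic area, and hence so is the longest persistence bar. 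Since the boundary depth is $\le C\,E(\phi)$ for a universal constant $C$, this yields the required $\rho_0$. As an alternative route, the equator is heavy for the Entov--Polterovich partial symplectic quasi-state coming from the Calabi quasimorphism on $\Ham(S^2)$, and one could instead bound below how far $\phi$ moves the associated action spectrum.

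The main obstacle is precisely the uniformity in $k$ and $a$. As $k \to \infty$ every foot and every lobe shrinks to area $0$, and as $a \to 0$ the stars degenerate; any bound phrased in terms of a single displaced region, or of the largest lobe, therefore collapses to $0$ --- which is exactly why the double octopus first looked like a potential counterexample. The heart of the proof must be that the monotone Floer homology of $L$ assembles the feet coherently rather than one at a time, forcing a single long bar whose length is comparable to the hemisphere area however finely the feet are subdivided. Making this rigorous means controlling the Floer differential of the very degenerate, high-intersection configuration $\mathcal{O}_{k,a}$ and verifying that the relevant strips cannot be short-circuited through the thin legs; the interleaving and thinness of the legs, which defeat the elementary argument, must be turned into the very reason the spectral bound survives. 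I would also record, with the normalization used here, the two standard inputs $\delta(L,\phi(L)) \le E(\phi)$ and $\text{boundary depth} \le C\,E(\phi)$ in the monotone $S^2$ setting. This reproduces the observation attributed to Khanevsky and Zapolsky in \cite{KZ}.
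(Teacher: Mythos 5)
Your strategy---bounding $d(L,\mathcal{O}_{k,a})$ from below \emph{directly} by a Floer-theoretic quantity (boundary depth or a spectral invariant) that is itself a lower bound for the Lagrangian Hofer distance---is a genuinely different route from the paper's, and your diagnosis that the energy--capacity argument cannot work and that uniformity in $k$ and $a$ is the crux is exactly right. But the step carrying all the weight is not proved, and the heuristic you offer for it appears to be wrong. You claim that the total oscillation of the action on $L\cap\mathcal{O}_{k,a}$ is bounded below by a fixed fraction of the area of $S^2$, ``and hence so is the longest persistence bar.'' Neither half of this holds up. The $2k$ points of $L\cap\mathcal{O}_{k,a}$ cut $L$ into $2k$ arcs, each of which is a boundary edge of exactly one foot, of area $(2\pi-a)/k$, these feet lying alternately above and below the equator; the corresponding action increments therefore alternate in sign, so the naive action function takes essentially two values differing by $O(1/k)$ --- the oscillation \emph{collapses} as $k\to\infty$ instead of staying bounded below. (Moreover, on $S^2$ the action is only defined modulo the hemisphere areas, so ``total oscillation'' is not even well posed without passing to the Novikov cover.) And even granting a large oscillation, large total variation of the action does not by itself produce a long bar: that requires controlling the Floer differential of a configuration with $2k$ intersection points and many small bigons, which is precisely the part you defer. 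As written, the proposal is a plan with the decisive estimate missing, and the one concrete computation offered in its support points the other way.

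For comparison, the paper's proof (following Khanevsky--Zapolsky) avoids any direct spectral computation for the octopus. It uses the triangle inequality for the Lagrangian Hofer distance together with two inputs: the distance between the oriented equator $L$ and the reversed equator $L^{opp}$ equals the area $4\pi$ of the sphere, and $\mathcal{O}_{k,a}$ is Hofer-\emph{close} to $L^{opp}$, namely $d(L^{opp},\mathcal{O}_{k,a})\le 4\pi/k-\eps$, because the interleaved thin legs can be combed onto the reversed equator by sweeping only about two legs' worth of area. Hence $d(L,\mathcal{O}_{k,a})\ge 4\pi(k-1)/k+\eps$. Note that the moral is the opposite of your $L^1$ observation: the octopus is $L^1$-close to $L$ but Hofer-close to $L^{opp}$, and the uniform constant comes from the known rigidity of $d(L,L^{opp})$, not from a new Floer computation on the degenerate configuration. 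If you want to salvage your approach, the honest path is to prove the lower bound for $d(L,L^{opp})$ --- where the Floer/quasi-state machinery genuinely applies to a clean pair of Lagrangians --- and then reduce the octopus to it by the triangle inequality, exactly as the paper does.
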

        
        \noindent
        {\it Proof.} \;  The distance between $L$ and $L^{opp}$ is equal to the area of $S^{2}$, that is say to $4 \pi$.  Let $\rho$ be given. Consider $ \mathcal{O}_{k,a}$. Assume that it belongs to $L(\rho)$. The distance from $ \mathcal{O}_{k,a}$ to $L^{opp}$ is less or equal to the area of two consecutive legs, that is to say to $(4 \pi / k) - \eps$. Then, by the triangle inequality:
        
        $$
        d(L, \mathcal{O}_{k,a}) \geq  \| d(L, L^{opp}) - d(L^{opp}, \mathcal{O}_{k,a}) \| 
        $$
        \noindent
        so this means
        $$
        d(L, \mathcal{O}_{k,a}) \geq 4 \pi - 4 \pi/k +\eps  =  \frac{4\pi(k-1)}{k}  + \eps
        $$
        
        \noindent
        which shows that octopuses with $k \geq 2$ and $a$ sufficienttly small cannot lie in arbitrarily small balls around the standard equator $L$.

        \QED


        \end{document}